\newtheorem{thm}{Theorem}[section]
\newtheorem{cor}[thm]{Corollary}
\newtheorem{prop}[thm]{Proposition}
\DeclareMathOperator{\Ext}{Ext}
\author{Luke Kershaw} 
\address{School of Mathematics, University of Bristol, Bristol BS8 1UG, UK}
\email{l.kershaw@bristol.ac.uk}
\author{Jeremy Rickard} 
\address{School of Mathematics, University of Bristol, Bristol BS8 1UG, UK}
\email{j.rickard@bristol.ac.uk}
\date\today
\subjclass[2010]{16E10,16G10}
\keywords{delooping level, semi-Gorenstein-projective modules, finitistic
  dimension conjecture}
\title[]{A finite dimensional algebra with infinite delooping level}
\begin{document}
\maketitle

\begin{abstract}
  We give an example of a finite dimensional algebra with infinite delooping
  level, based on an example of a semi-Gorenstein-projective module due to
  Ringel and Zhang.
\end{abstract}

\section{Introduction}
\label{sec:introduction}

One of the most celebrated open problems in the representation theory of finite
dimensional algebras is the finitistic dimension conjecture, publicized by
Bass~\cite{MR157984} in 1960, and subsequently proven to imply a host of other
homological conjectures.

In recent years other, potentially stronger, conditions that would imply the
finitistic dimension conjecture have been considered. One is the notion of
``injective generation''~\cite{MR3982972}, that we consider briefly in
Section~\ref{sec:other-prop-lambd}. But our main focus is the finiteness of
delooping level, introduced by G\'{e}linas~\cite{MR4355734}. He works
in greater generality, but since the main example in this paper is a finite
dimensional algebra, we shall describe the delooping level in that context.

Given a finite dimensional algebra $A$ and a finitely generated right $A$-module
$M$, the delooping level of $M$ is the smallest nonnegative integer $n$ such
that $\Omega^{n}M$ is a direct summand (in the stable module category) of
$\Omega^{n+1}N$ for some other finitely generated module $N$, or is infinite if
no such $n$ exists. The delooping level is denoted by $\operatorname{dell}M$.

Then the delooping level $\operatorname{dell}A$ is defined to be the largest
delooping level of a simple $A$-module.

One of the main properties of the delooping level is that 
$\operatorname{dell}A$ is an upper bound for the (big) finitistic dimension
of the opposite algebra $A^{op}$ and so if
$\operatorname{dell}A<\infty$ then the big finitistic dimension conjecture holds
for $A^{op}$~\cite[Proposition 1.3]{MR4355734}.

In this paper we shall give what we believe to be the first known example of a
finite dimensional algebra with infinite delooping level. This does not give a
counterexample to the finitistic dimension conjecture.

Although the delooping level of an algebra $A$ is defined in terms of the
delooping levels of \emph{simple} modules, we show in
Section~\ref{sec:modul-with-infin} that if there is any finitely generated
$A$-module $M$ with $\operatorname{dell}M=\infty$, then there is another finite
dimensional algebra $B$ with $\operatorname{dell}B=\infty$.

Our main result, Proposition~\ref{prop:delooping-level-mq-3}, is that a
particular ``semi-Gorenstein-projective'' module studied by Ringel and
Zhang~\cite{MR4076806} has infinite delooping level. Gorenstein projective
modules are easily seen to have delooping level equal to zero, but the same
argument does not apply to semi-Gorenstein-projective modules. What inspired us
to look at this example is that G\'{e}linas showed in~\cite[Theorem
1.10]{MR4355734} that the module $N$ in the definition of delooping level can
always be taken to be $\Sigma^{n+1}\Omega^{n}M$ (where $\Sigma$ denotes the left
adjoint of $\Omega$), which in the case of a semi-Gorenstein-projective module
simplifies to $\Sigma M$, so that proving $\operatorname{dell}M=\infty$ reduces
to proving that $\Omega^{n}M$ is never a summand of $\Omega^{n}(\Omega\Sigma)M$.

There are other examples known of modules that are semi-Gorenstein-projective,
but not Gorenstein projective. The first was given by Jorgensen and \c{S}ega,
and Marczinzik, who calls such modules ``stable'', gave
others~\cite{MR2238367,marczinzik:2017}. It might be interesting to study
the delooping level of these examples.

\vspace{2mm}

\noindent \textbf{Acknowledgments.}  The first author was supported by
Engineering and Physical Sciences Research Council Doctoral Training Partnership
award EP/R513179/1.

\section{Delooping levels of one point extensions}
\label{sec:modul-with-infin}

\begin{prop}
  If $M$ is a finitely generated module with infinite delooping level
  for a finite dimensional $k$-algebra $A$, then there is a finite dimensional
  $k$-algebra $B$ with infinite delooping level.
\end{prop}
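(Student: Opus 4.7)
The plan is to form the \emph{one-point extension} $B = \begin{pmatrix} A & 0 \\ M & k \end{pmatrix}$, a finite dimensional $k$-algebra with primitive idempotents $e = \begin{pmatrix} 1_A & 0 \\ 0 & 0 \end{pmatrix}$ and $f = \begin{pmatrix} 0 & 0 \\ 0 & 1 \end{pmatrix}$. A direct calculation shows that $eB \cong A$ as a right $B$-module (with $f$ acting as zero), while $fB$ is indecomposable projective with simple top $S$ and $\rad(fB) = M$. The short exact sequence $0 \to M \to fB \to S \to 0$ then gives $\Omega_B S \cong M$ in the stable category, so it suffices to show $\operatorname{dell}_B S = \infty$.

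First I would record two elementary observations. If $Y$ is any finitely generated $A$-module, regarded as a $B$-module via the projection $B \twoheadrightarrow A$, then the top of $Y$ contains no copies of $S$, so its $B$-projective cover uses only $eB$-type projectives, and hence $\Omega_B Y \cong \Omega_A Y$. By iteration, $\Omega_B^n S \cong \Omega_A^{n-1} M$ for all $n \geq 1$. Secondly, for any $B$-module $N$, the syzygy $\Omega_B N$ is automatically an $A$-module: it lies in the radical of its projective cover, and both $\rad(eB)$ and $\rad(fB) = M$ are annihilated by $f$. Thus $\Omega_B^{n+1} N \cong \Omega_A^n(\Omega_B N)$.

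The heart of the argument is to show that $B$-stable summands among $A$-modules coincide with $A$-stable summands. The essential $\Hom$-computations are $\Hom_B(fB, X) = Xf = 0$ and $\Hom_B(X, fB) \cong \Hom_A(X, M)$ for any $A$-module $X$, because a $B$-linear map $\phi \colon X \to fB$ satisfies $\phi(x) \cdot f = \phi(xf) = 0$, forcing $\phi(x) \in M$. Consequently any factorization of a map between $A$-modules through a $B$-projective $eB^{(a)} \oplus fB^{(b)}$ already factors through its $A$-projective summand $eB^{(a)} \cong A^{(a)}$, so the two notions of stable summand agree on $A$-modules.

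Combining these, suppose for contradiction that $\operatorname{dell}_B S = n < \infty$. Since $\Hom_B(S, Y) = 0$ for any $A$-module $Y$, $S$ cannot be a stable summand of $\Omega_B N$, so $n \geq 1$. Then $\Omega_A^{n-1} M \cong \Omega_B^n S$ is a stable summand of $\Omega_B^{n+1} N \cong \Omega_A^n(\Omega_B N)$ in the $B$-stable, hence the $A$-stable, category, yielding $\operatorname{dell}_A M \leq n - 1 < \infty$, contrary to hypothesis. The main obstacle is the $\Hom$-analysis of the previous paragraph: the key point is that the new projective $fB$ creates no extra stable isomorphisms among $A$-modules, so that a $B$-stable delooping of $S$ really does encode an $A$-stable delooping of $M$.
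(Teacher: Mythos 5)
Your proof is correct and follows essentially the same route as the paper: form the one-point extension $B=A[M]$, observe that $\Omega_B^nS\cong\Omega_A^{n-1}M$ and that every syzygy of a $B$-module is (pulled back from) an $A$-module, and deduce that a finite delooping level for $S$ over $B$ would give one for $M$ over $A$. Your extra paragraph checking that stable direct summands among $A$-modules agree in $\underline{\mathrm{mod}}\,B$ and $\underline{\mathrm{mod}}\,A$ (via $\Hom_B(fB,X)=Xf=0$) is a detail the paper leaves implicit, and is a worthwhile addition.
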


\begin{proof}
  Let $B$ be the one point extension algebra
  $$B=A[M]=
  \begin{pmatrix}
    k&M\\0&A
  \end{pmatrix},
  $$
  and let $S$ be the simple $B$-module $
  \begin{pmatrix}
    k&M
  \end{pmatrix}/
  \begin{pmatrix}
    0&M
  \end{pmatrix}
  $

  Then for $n\geq1$, $\Omega^{n}S=
  \begin{pmatrix}
    0&\Omega^{n-1}M
  \end{pmatrix}
  $. If $S$ has finite delooping level, then for all sufficiently large $n$,
  $\Omega^{n}S$ is a summand of $\Omega^{n+1}X$ for some $B$-module
  $X$, and $\Omega X=
\begin{pmatrix}
  0&N
\end{pmatrix}
$ for some $A$-module $N$, and $\Omega^{n+1}X=
\begin{pmatrix}
  0&\Omega^{n}N
\end{pmatrix}
$.

Therefore $\Omega^{n-1}M$ is a summand of $\Omega^{n}N$, and so $M$
has finite delooping level.
\end{proof}

\section{Ringel and Zhang's example}
\label{sec:ring-zhangs-example}

In~\cite{MR4076806}, Ringel and Zhang exhibited an example of a
three-dimensional module $M$ for a six-dimensional local finite dimensional
algebra $\Lambda$ that is semi-Gorenstein-projective (meaning that
$\Ext^{i}(M,\Lambda)=0$ for all $i\geq1$), but is not torsionless (i.e., is not
a submodule of a projective $\Lambda$-module), and hence not
Gorenstein-projective.

In this section, we gather the information that we will need about this
example. All of the results in this section are taken from Ringel and Zhang's
paper, except for the remarks about the subalgebra $\Gamma$. Note that they work
with left modules, whereas we are using right modules, so the algebra we define
is the opposite of theirs.

\subsection{The algebra $\Lambda=\Lambda(q)$ and the subalgebra
  $\Gamma$}~\cite[\S6.1]{MR4076806}
\label{sec:algebra-lambd}

Let $k$ be a field, and $q\in k$ an element with infinite multiplicative
order. Then $\Lambda=\Lambda(q)$ is the algebra
$$k\langle x,y,z\rangle/(x^{2},y^{2},z^{2},zy,yx+qxy,zx-xz,yz-xz),$$
which is easily seen to be six-dimensional, with basis $1,x,y,z,yx,zx$.

Let $\Gamma$ be the two-dimensional subalgebra of $\Lambda$ generated by
$x$. Note that $\Lambda$ is a free right $\Gamma$-module.

\subsection{The modules $M(\alpha)$}~\cite[\S6.1,\S6.3]{MR4076806}
\label{sec:modules-malpha}

For each $\alpha\in k$, $M(\alpha)$ is the three-dimensional $\Lambda$-module
with basis $v,v',v''$, where $vx=\alpha v'$, $vy=v'$, $vz=v''$, with $v'$ and
$v''$ annihilated by $x$, $y$ and $z$.

Next, we gather the results that we need regarding the modules $M(\alpha)$. We
shall state them in terms of the effect of $\Omega$ and $\Sigma$ on the modules,
but apart from the brief remark about the subalgebra $\Gamma$, everything in the
following proposition is a restatement of results of Ringel and
Zhang~\cite{MR4076806}.

\begin{prop}\label{sec:modules-malpha-1}
  Let $\alpha\in k$.
  \begin{enumerate}
  \item If $\alpha\neq1$ then $\Omega M(\alpha)=M(q\alpha)$.
  \item If $\alpha\neq q$ then $\Sigma M(\alpha)=M(q^{-1}\alpha)$.
  \item $\Omega\Sigma M(q)$ is a two-dimensional module that is free as a $\Gamma$-module. 
  \end{enumerate}
\end{prop}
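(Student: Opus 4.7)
The plan is to handle parts (1) and (2) by an explicit computation with projective covers plus the adjointness of $\Sigma$ and $\Omega$, and then to focus on part (3), where the genuinely new observation is the freeness over $\Gamma$.

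For part~(1), I would construct $\Omega M(\alpha)$ as the kernel of the natural projective cover $\pi\colon \Lambda \twoheadrightarrow M(\alpha)$ sending $1\mapsto v$. The kernel is $3$-dimensional and contains $yx$, $zx$ (since $v'$ and $v''$ are annihilated by the radical) together with $x-\alpha y$ (since $vx=\alpha v' = \alpha\cdot vy$); these are linearly independent, hence a basis. The key identity when $\alpha\neq 1$ is $(x-\alpha y)z = (1-\alpha)zx$ (using $yz=xz$ and $xz=zx$), together with $(x-\alpha y)y = -q^{-1}yx$, which together show that the kernel is cyclic, generated by $w:=x-\alpha y$. Setting $w':=-q^{-1}yx$ and $w'':=(1-\alpha)zx$, one reads off $wx=q\alpha\cdot w'$, $wy=w'$, $wz=w''$, matching the defining data of $M(q\alpha)$.

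Part~(2) then follows from (1) by adjointness. When $\alpha\neq q$ we have $q^{-1}\alpha\neq 1$, so (1) produces a short exact sequence $0\to M(\alpha)\to \Lambda\to M(q^{-1}\alpha)\to 0$ exhibiting $\Omega M(q^{-1}\alpha)\cong M(\alpha)$. Since the only nonzero indecomposable projective is $\Lambda$ of dimension $6$, the $3$-dimensional module $M(q^{-1}\alpha)$ has no projective summand, and the defining property of $\Sigma$ as stable left adjoint to $\Omega$ then forces $\Sigma M(\alpha)\cong M(q^{-1}\alpha)$ in the stable category.

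For part~(3) the formula in (2) breaks down at $\alpha=q$, precisely because $M(q)$ fails to be torsionless: one cannot realise $\Sigma M(q)$ as a cokernel of an injection $M(q)\hookrightarrow P$. Here I would instead invoke Ringel and Zhang's explicit description, obtained from the (non-injective) left $\add(\Lambda)$-approximation of $M(q)$, whose kernel is the $1$-dimensional summand $\langle v''\rangle$; taking the cokernel yields $\Sigma M(q)$ explicitly, and then $\Omega\Sigma M(q)$ is computed as the kernel of its projective cover, giving a $2$-dimensional module. The new piece is the $\Gamma$-freeness. Since $\dim_{k}\Gamma=2$, a $2$-dimensional $\Gamma$-module is free of rank one iff $x$ acts nontrivially on it, and since $\Gamma$-linearity of $x^2=0$ is automatic, one need only exhibit a single element on which $x$ acts nonzero in the explicit description of $\Omega\Sigma M(q)$.

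The main obstacle is the explicit identification of $\Sigma M(q)$ at the exceptional value $\alpha=q$; once the Ringel--Zhang computation is in hand, the passage to $\Omega\Sigma M(q)$ is routine, and the verification of $\Gamma$-freeness reduces to a single non-vanishing check.
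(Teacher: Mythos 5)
Your part (1) is correct and complete: it is a self-contained version of what the paper simply outsources to Ringel--Zhang's Lemma 6.4, and the identities $(x-\alpha y)x=-\alpha\,yx$, $(x-\alpha y)y=-q^{-1}yx$, $(x-\alpha y)z=(1-\alpha)zx$ do exhibit $\ker(\Lambda\twoheadrightarrow M(\alpha))=\operatorname{span}(x-\alpha y,\,yx,\,zx)$ as a copy of $M(q\alpha)$ precisely when $\alpha\neq1$. Part (3) follows the same route as the paper: both rest on Ringel--Zhang's Lemma 6.2, which identifies $\Omega\Sigma M(q)$ with the maximal torsionless quotient $M(q)/M(q)z=M(q)/\langle v''\rangle$ (note that $\langle v''\rangle$ is a submodule, not a direct summand, since $vz=v''$), after which $\Gamma$-freeness is exactly your single non-vanishing check $vx=qv'\neq0$.

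The genuine gap is in part (2). From the exact sequence $0\to M(\alpha)\to\Lambda\to M(q^{-1}\alpha)\to0$ you conclude that adjointness ``forces'' $\Sigma M(\alpha)\cong M(q^{-1}\alpha)$, but no such general principle holds: the counit $\Sigma\Omega N\to N$ of the adjunction $\Sigma\dashv\Omega$ need not be an isomorphism even when $N$ has no projective summands, because $\Omega$ is not fully faithful on the stable category -- and this example is built around the failure of exactly such inverse identities (the content of part (3) is that the unit $M(q)\to\Omega\Sigma M(q)$ is not an isomorphism). What $\Sigma M(\alpha)$ actually is, up to projective summands, is the cokernel of a left $\add\Lambda$-approximation of $M(\alpha)$. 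So your argument needs the additional fact that the syzygy embedding $M(\alpha)\hookrightarrow\Lambda$, $v\mapsto x-q^{-1}\alpha y$, is such an approximation, i.e.\ that every homomorphism $M(\alpha)\to\Lambda$ extends to $\Lambda\to\Lambda$; equivalently, $\Ext^{1}(M(q^{-1}\alpha),\Lambda)=0$. This is true (one checks directly that every homomorphism $M(\alpha)\to\Lambda$ sends $v$ into $\Lambda\cdot(x-q^{-1}\alpha y)=\operatorname{span}(x-q^{-1}\alpha y,\,yx,\,zx)$, so it is the restriction of a left multiplication), and it is precisely the kind of computation carried out in the Ringel--Zhang lemmas that the paper cites for this part -- but it is an unavoidable extra verification, not a formal consequence of adjointness.
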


\begin{proof}
  Both (1) and (2) follow immediately from~\cite[Lemma 6.4 and Lemma
  3.2]{MR4076806}. Note that Ringel and Zhang use the notation $\mho
  M(\alpha)$ where we use $\Sigma M(\alpha)$.

  For any module $M$, $\Omega\Sigma M$ is the maximal torsionless quotient of
  $M$. For $M=M(q)$, this is shown in~\cite[Lemma 6.2]{MR4076806} to be
  $M(q)/M(q)z$, which is freely generated as a $\Gamma$-module by $v$.
\end{proof}

\section{The delooping level of $M(q)$}
\label{sec:delooping-level-mq}

\begin{prop}\label{prop:delooping-level-mq-3}
  The $\Lambda$-module $M(q)$ has infinite delooping level.
\end{prop}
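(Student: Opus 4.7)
The plan is to use the reduction mentioned in the introduction: because $M(q)$ is semi-Gorenstein-projective, G\'elinas's theorem allows us to take $N = \Sigma M(q)$, so $\operatorname{dell} M(q) = \infty$ if and only if, for every $n \geq 0$, $\Omega^{n} M(q)$ fails to be a direct summand in $\underline{\Mod}\Lambda$ of $\Omega^{n}(\Omega\Sigma M(q))$. I will establish this non-summand property by restricting both sides to the subalgebra $\Gamma$ and observing that in $\underline{\Mod}\Gamma$ only one of them is nonzero.

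First, I would iterate Proposition~\ref{sec:modules-malpha-1}(1). Since $q$ has infinite multiplicative order, $q^{k} \neq 1$ for every $k \geq 1$, so induction yields $\Omega^{n} M(q) \cong M(q^{n+1})$ for all $n \geq 0$.

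The main tool is the restriction functor $\Res\colon \Mod\Lambda \to \Mod\Gamma$. Because $\Lambda$ is free as a right $\Gamma$-module, $\Res$ is exact and carries projective $\Lambda$-modules to projective $\Gamma$-modules; applying $\Res$ to a short exact sequence $0 \to \Omega_\Lambda X \to P \to X \to 0$ with $P$ projective therefore shows $\Res\Omega_\Lambda X \cong \Omega_\Gamma \Res X$ in $\underline{\Mod}\Gamma$. In particular, $\Res$ descends to the stable categories, commutes with $\Omega$ there, and preserves direct summands. Proposition~\ref{sec:modules-malpha-1}(3) says $\Res(\Omega\Sigma M(q))$ is free as a $\Gamma$-module, hence zero in $\underline{\Mod}\Gamma$, so iterating gives $\Res \Omega^{n}(\Omega\Sigma M(q)) = 0$ in $\underline{\Mod}\Gamma$ for every $n$.

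On the other hand, since $q^{n+1} \neq 0$, the defining action on $M(q^{n+1})$ has $vx = q^{n+1} v' \neq 0$, so $v$ generates a free $\Gamma$-submodule with complement $k v''$, giving $\Res M(q^{n+1}) \cong \Gamma \oplus k$ as a $\Gamma$-module, where $k$ is the simple $\Gamma$-module. Thus $\Res M(q^{n+1}) \cong k \neq 0$ in $\underline{\Mod}\Gamma$. If $M(q^{n+1})$ were a direct summand of $\Omega^{n}(\Omega\Sigma M(q))$ in $\underline{\Mod}\Lambda$, applying $\Res$ would exhibit the nonzero object $k$ as a summand of $0$ in $\underline{\Mod}\Gamma$, a contradiction. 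The only step requiring any care is the compatibility $\Res \Omega_\Lambda \cong \Omega_\Gamma \Res$ on the stable level, which is immediate from $\Lambda$ being projective over $\Gamma$; the rest is extracted directly from Proposition~\ref{sec:modules-malpha-1}.
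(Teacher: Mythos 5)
Your proof is correct and takes essentially the same approach as the paper: both reduce via G\'elinas's theorem to showing that $\Omega^{n}M(q)\cong M(q^{n+1})$ is never a stable summand of $\Omega^{n}(\Omega\Sigma M(q))$, and both exploit that $\Omega\Sigma M(q)$ and $\Lambda$ are free over $\Gamma$. The only difference is the final step, where the paper observes that a summand of a free $\Gamma$-module must be even-dimensional while $M(q^{n+1})$ is three-dimensional, whereas you restrict to $\underline{\Mod}\Gamma$ and compare a stably zero object with the stably nonzero $\Res M(q^{n+1})\cong k$; both are valid.
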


\begin{proof}
  G\'{e}linas showed that if a module $M$ has delooping level $n<\infty$, then
  $\Omega^{n}M$ is a stable direct summand of
  $\Omega^{n+1}\Sigma^{n+1}\Omega^{n}M$~\cite[Theorem 1.10]{MR4355734}. We shall
  show that this is not the case for $M=M(q)$.

  By Proposition~\ref{sec:modules-malpha-1}(1), $\Omega^{n}M(q)\cong
  M(q^{n+1})$ is three-dimensional.

  By Proposition~\ref{sec:modules-malpha-1}(2),
  $\Sigma^{n}\Omega^{n}M(q)\cong M(q)$, so
  $$\Omega^{n+1}\Sigma^{n+1}\Omega^{n}M\cong \Omega^{n+1}\Sigma M(q).$$

  But since $\Omega\Sigma M(q)$ and $\Lambda$
  are both free as $\Gamma$-modules,
  $$\Omega^{n+1}\Sigma M(q)=\Omega^{n}\left(\Omega\Sigma M(q)\right)$$
  is also free as a $\Gamma$-module and therefore every direct summand has even
  dimension.
\end{proof}

\begin{cor}
  The one-point extension algebra $\Lambda[M(q)]$ has infinite delooping level.
\end{cor}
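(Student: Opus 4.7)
The plan is to apply the main result of Section~\ref{sec:modul-with-infin} with the input provided by Proposition~\ref{prop:delooping-level-mq-3}. Proposition~\ref{prop:delooping-level-mq-3} furnishes a finitely generated module over a finite dimensional $k$-algebra, namely $M(q)$ over $\Lambda$, with infinite delooping level; and the Proposition of Section~\ref{sec:modul-with-infin} produces, from any such pair $(A,M)$, a finite dimensional algebra with infinite delooping level. So I would simply specialise that construction to $A=\Lambda$ and $M=M(q)$.

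The one extra thing to note is that, although the statement of the Section~\ref{sec:modul-with-infin} Proposition only asserts the existence of some algebra $B$ with $\operatorname{dell}B=\infty$, its proof pins $B$ down explicitly as the one point extension $A[M]$ and exhibits the simple module $S$ sitting at the extension vertex as having $\operatorname{dell}S=\infty$. In our setting this gives $\operatorname{dell}\Lambda[M(q)]\geq\operatorname{dell}S=\infty$, which is exactly the content of the corollary. So the plan reduces to pointing at that proof rather than invoking the bare statement.

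There is no substantial obstacle. The delicate step — that $\Omega^{n}M(q)$ is never a stable summand of $\Omega^{n+1}\Sigma M(q)$, proved via the parity-of-dimension obstruction coming from freeness over $\Gamma$ — has already been carried out in Proposition~\ref{prop:delooping-level-mq-3}, and the syzygy bookkeeping that translates an infinite delooping level over $A$ into one over $A[M]$ has already been carried out in Section~\ref{sec:modul-with-infin}. The corollary is a one-line combination of the two.
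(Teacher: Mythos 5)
Your proposal is correct and is essentially the paper's own proof: the corollary follows by specialising the one point extension construction of Section~\ref{sec:modul-with-infin} to $A=\Lambda$ and $M=M(q)$, with Proposition~\ref{prop:delooping-level-mq-3} supplying the hypothesis. Your remark that one should point at the \emph{proof} of that proposition --- which identifies $B$ as $A[M]$ and shows the simple module at the extension vertex has infinite delooping level --- rather than its bare existential statement is exactly the right reading.
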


\begin{proof}
  Proposition~\ref{sec:modules-malpha-1} and
  Proposition~\ref{prop:delooping-level-mq-3}.
\end{proof}

\section{Other properties of $\Lambda[M(q)]$}
\label{sec:other-prop-lambd}

In this final section, we consider some other related properties of the algebra
$\Lambda[M(q)]$.

One of the main reasons that the delooping level was introduced was its
connection with the finitistic dimension conjecture: for Artinian rings $A$,
G\'{e}linas proved that the delooping level $\operatorname{dell}A$ of $A$ is an
upper bound for the big finitistic dimension $\operatorname{Findim}A^{op}$ of
its opposite algebra, so that finiteness of the delooping level of $A$ implies
the big finitistic dimension conjecture for $A^{op}$~\cite[Proposition
1.3]{MR4355734}.

It is therefore natural to wonder whether $\Lambda[M(q)]^{op}$ might have infinite
finitistic dimension. This is not the case.

\begin{prop}
  $\operatorname{Findim}\Lambda[M(q)]^{op}=1$
\end{prop}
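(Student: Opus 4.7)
My plan is to compute $\operatorname{Findim}B^{op}$, where $B=\Lambda[M(q)]$, by identifying left $B$-modules with triples $(U,V,g)$, where $U$ is a $k$-vector space, $V$ a left $\Lambda$-module and $g\colon M(q)\otimes_{\Lambda}V\to U$ a $k$-linear map, and then reducing to the finitistic dimension for left $\Lambda$-modules. In this description, the indecomposable projective left $B$-modules are $Be_{1}=(k,0,0)$ and $Be_{2}=(M(q),\Lambda,\operatorname{id})$, and the short exact sequence
$$
  0\to(U,0,0)\to(U,V,g)\to(0,V,0)\to 0
$$
has projective left-hand term, so $\operatorname{pd}_{B}(U,V,g)=\operatorname{pd}_{B}(0,V,0)$ whenever the latter is positive. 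It therefore suffices to understand $\operatorname{pd}_{B}(0,V,0)$ in terms of $\operatorname{pd}_{\Lambda}V$.

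To this end, I would establish the stable equivalence $\Omega_{B}(0,V,0)\simeq(0,\Omega_{\Lambda}V,0)$ as follows. Writing $F(V)=(M(q)\otimes_{\Lambda}V,V,\operatorname{id})$, the short exact sequence $0\to(M(q)\otimes_{\Lambda}V,0,0)\to F(V)\to(0,V,0)\to 0$ has projective first term, so $F(V)\simeq(0,V,0)$ stably. Applying $F$ to a projective cover $\Lambda^{b}\twoheadrightarrow V$ yields a projective cover $Be_{2}^{b}\twoheadrightarrow F(V)$, and the $\operatorname{Tor}$ long exact sequence produces a short exact sequence $0\to(\operatorname{Tor}_{1}^{\Lambda}(M(q),V),0,0)\to F(\Omega_{\Lambda}V)\to\Omega_{B}F(V)\to 0$ with projective first term, so $\Omega_{B}F(V)\simeq F(\Omega_{\Lambda}V)$ stably. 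Iterating yields $\Omega_{B}^{n}(0,V,0)\simeq(0,\Omega_{\Lambda}^{n}V,0)$ for $n\geq 1$; since $(0,W,0)$ is $B$-projective only when $W=0$, we deduce that $\operatorname{pd}_{B}(0,V,0)<\infty$ if and only if $\operatorname{pd}_{\Lambda}V<\infty$, and that $\operatorname{pd}_{B}(0,V,0)=\operatorname{pd}_{\Lambda}V+1$ for nonzero finite-pd $V$. Hence $\operatorname{Findim}B^{op}=1+\operatorname{Findim}\Lambda^{op}$.

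The main obstacle is therefore to prove $\operatorname{Findim}\Lambda^{op}=0$, i.e., every left $\Lambda$-module of finite projective dimension is projective. Since projective modules over the local ring $\Lambda$ are free, this reduces by induction on projective dimension to showing that every injective $\Lambda$-linear map $f\colon F\to G$ between free left $\Lambda$-modules is split. The key facts, both immediate from the defining relations, are that $yx\cdot\operatorname{rad}\Lambda=0$ and $\operatorname{rad}^{3}\Lambda=0$. If the induced $k$-linear map $\bar f\colon F/\operatorname{rad}F\to G/\operatorname{rad}G$ were not injective, any lift $v\in F$ of a nonzero kernel element would satisfy $f(v)\in\operatorname{rad}G$ and $yx\cdot v\neq 0$ (the latter because $v\notin\operatorname{rad}F$ and $yx\cdot\operatorname{rad}\Lambda=0$), whence $f(yx\cdot v)=yx\cdot f(v)=0$, contradicting injectivity of $f$. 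Once $\bar f$ is known injective, a $k$-linear retraction of $\bar f$ lifts to a $\Lambda$-linear $g\colon G\to F$; then $g\circ f=\operatorname{id}_{F}+R$ with $R(F)\subseteq\operatorname{rad}F$ and $R$ $\Lambda$-linear, hence $R^{3}=0$ and $\operatorname{id}+R$ is invertible (with inverse $\operatorname{id}-R+R^{2}$), so $(g\circ f)^{-1}\circ g$ is a $\Lambda$-linear retraction of $f$.

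Combining these steps yields $\operatorname{Findim}B^{op}=1$, with the lower bound realised by $(0,\Lambda,0)$, which has projective dimension $1$.
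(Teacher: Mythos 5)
Your argument is correct and lands on the paper's conclusion, but where the paper simply cites two known results you prove both from scratch. The paper gets the upper bound from the Fossum--Griffith--Reiten inequality $\operatorname{Findim}B^{op}\le 1+\operatorname{Findim}k+\operatorname{Findim}\Lambda^{op}$ for triangular matrix rings, and dismisses $\operatorname{Findim}\Lambda^{op}=0$ with the one-line remark that $\Lambda^{op}$ is a local finite dimensional algebra. Your triple-description reduction is in effect a proof of the first of these in this special case, and your $yx$/$\operatorname{rad}^{3}\Lambda=0$ argument is the standard proof that local Artinian rings have vanishing big finitistic dimension, with $yx$ playing the role of a nonzero element of the last nonvanishing power of the radical; both are sound, and your splitting argument does work for free modules of arbitrary rank, as is needed for the big finitistic dimension. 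The lower bound via $(0,\Lambda,0)$ is identical to the paper's.

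One caveat: several of your intermediate claims of stable isomorphism are false as stated. A short exact sequence $0\to P\to X\to Y\to 0$ with $P$ projective does not make $X$ and $Y$ stably isomorphic -- take $V=\Lambda$ in your first instance, where $F(\Lambda)=Be_{2}$ is projective while $(0,\Lambda,0)$ is not -- and for the same reason the equality $\operatorname{pd}_{B}(U,V,g)=\operatorname{pd}_{B}(0,V,0)$ can fail. What such a sequence does give, by pulling back a projective cover of $Y$ and applying Schanuel, is $\Omega X\oplus(\text{proj})\cong\Omega Y\oplus(\text{proj})$ together with $\operatorname{pd}X\le\operatorname{pd}Y\le\max(\operatorname{pd}X,1)$; these weaker statements are all your argument actually uses (likewise the exact formula $\operatorname{pd}_{B}(0,V,0)=\operatorname{pd}_{\Lambda}V+1$ is more than you need -- only the inequality $\le$ matters for the upper bound, with $(0,\Lambda,0)$ supplying the lower bound). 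So the proof survives once those claims are restated, but as written they are incorrect.
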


\begin{proof}
  Since
  $$\Lambda[M(q)]^{op}=
  \begin{pmatrix}
    k&0\\M(q)&\Lambda^{op}
  \end{pmatrix}
  $$
  there is an obvious module with projective dimension equal to one -- namely,
  $
  \begin{pmatrix}
    0&\Lambda^{op}
  \end{pmatrix}=
  \begin{pmatrix}
    M(q)&\Lambda^{op}
  \end{pmatrix}/
  \begin{pmatrix}
    M(q)&0
  \end{pmatrix},
  $
  and so $1\leq\operatorname{Findim}\Lambda[M(q)]^{op}$.
  
  But since $\Lambda[M(q)]^{op}$ is a triangular matrix ring, it is
  well-known~\cite[Corollary 4.21]{MR0389981} that
  $$\operatorname{Findim}\Lambda[M(q)]^{op}\leq
  1+\operatorname{Findim}k+\operatorname{Findim}\Lambda^{op},$$ and since
  $\Lambda^{op}$ is a local finite dimensional algebra,
  $\operatorname{Findim}\Lambda^{op}=0$. So
  $\operatorname{Findim}\Lambda[M(q)]^{op}\leq1$.
\end{proof}

Another property that has recently been shown to imply the big finitistic
conjecture for a finite dimensional algebra $A$ is ``injective generation''. If
the unbounded derived category $\mathcal{D}(A)$ is equal to
$\operatorname{Loc}(\operatorname{Inj}A)$, the localizing subcategory of
$\mathcal{D}(A)$ generated by injective modules, then the big finitistic
dimension conjecture holds for $A$~\cite[Theorem 4.3]{MR3982972}.

We shall end by showing that the algebra $\Lambda[M(q)]^{op}$ is not a
counterexample to injective generation.

\begin{prop}
  Injectives generate for $\Lambda[M(q)]^{op}$.
\end{prop}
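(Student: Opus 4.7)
The plan is to show that the two simple $B^{op}$-modules lie in $\operatorname{Loc}(\Inj B^{op})$; since this subcategory is closed under extensions, coproducts, and filtered colimits, and every $B^{op}$-module is built from simples by these operations, injective generation for $B^{op}$ will follow.

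The surjection $\pi\colon B^{op}\twoheadrightarrow\Lambda^{op}$ obtained by killing the ideal generated by $e_1$ induces a pullback $\pi^*\colon\Mod\Lambda^{op}\to\Mod B^{op}$, $N\mapsto(0,N,0)$, which I would show preserves injectives. By adjointness this is equivalent to exactness of $-\otimes_{B^{op}}\Lambda^{op}$, which reduces to $\Lambda$ being flat as a right $B$-module. The short projective resolution $0\to e_1B\to B\to\Lambda\to 0$ gives $\Tor_1^B(\Lambda,N)\cong\ker(e_1N\hookrightarrow N)=0$ for every $N$, confirming flatness. The derived functor $\pi^*\colon\mathcal{D}(\Lambda^{op})\to\mathcal{D}(B^{op})$ is then exact and coproduct-preserving, so it carries $\operatorname{Loc}(\Inj\Lambda^{op})$ into $\operatorname{Loc}(\Inj B^{op})$.

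With $D=\Hom_k(-,k)$, the simple $S_2=\pi^*(S)$ lies in $\operatorname{Loc}(\Inj B^{op})$ provided $S\in\operatorname{Loc}(\Inj\Lambda^{op})$, and the simple $S_1=e_1B^{op}$ fits into a short exact sequence $0\to S_1\to I_1\to\pi^*(D(M(q)))\to 0$, obtained by applying $D$ to the short exact sequence $0\to M(q)\to e_1B\to S_1\to 0$ of right $B$-modules; here $I_1=D(e_1B)$ is the injective hull of $S_1$. So the problem reduces to establishing injective generation for $\Lambda^{op}$.

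This final step is the main obstacle. The angle I would try is via the self-injective subalgebra $\Gamma=k[x]/(x^2)$, over which $\Lambda$ (and hence $\Lambda^{op}$) is free on both sides. Injective generation is trivial for $\Gamma$ since $\Gamma$ is self-injective, and one would seek to transfer it through the induction-restriction adjunction, for instance by iterating the counit $\Ind\Res(M)\twoheadrightarrow M$ to obtain a bar-type resolution of any $\Lambda^{op}$-module $M$ whose terms are inductions of $\Gamma$-modules; making this work requires controlling the inductions, which the Frobenius-like structure of the extension may enable.
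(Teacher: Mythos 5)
Your reduction from $\Lambda[M(q)]^{op}$ to $\Lambda^{op}$ is sound and essentially reproves, in this special case, the result of Cummings for triangular matrix rings that the paper simply cites: the inflation functor along $B^{op}\twoheadrightarrow\Lambda^{op}$ preserves injectives and coproducts (indeed $\Lambda\cong e_2B$ is even projective as a right $B$-module, so your Tor computation can be shortcut), and your short exact sequence $0\to S_1\to I_1\to\pi^*(DM(q))\to 0$ correctly handles the extension-vertex simple once everything over $\Lambda^{op}$ is known to lie in $\operatorname{Loc}(\Inj\Lambda^{op})$.

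However, the step you flag as "the main obstacle" is precisely the content of the proposition, and what you sketch for it does not work. Induction $-\otimes_\Gamma\Lambda$ is \emph{left} adjoint to restriction, so it preserves projectives, not injectives; a bar-type resolution of a $\Lambda^{op}$-module has terms induced from $\Gamma$-modules, and since every $\Gamma$-module is a direct sum of copies of $\Gamma$ and $k$, those terms are direct sums of copies of $\Lambda^{op}$ and $\Lambda^{op}/x\Lambda^{op}$. Showing that the free module $\Lambda^{op}$ lies in $\operatorname{Loc}(\Inj\Lambda^{op})$ is exactly the statement "injectives generate", so this route is circular. The self-injectivity of $\Gamma$ is a red herring here: the relevant functor that preserves injectives is coinduction $\Hom_\Gamma(\Lambda,-)$, and its images do not obviously generate. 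The paper's actual argument uses the modules $M(\alpha)$ in an essential way: by Proposition~\ref{sec:modules-malpha-1}(1) with $\alpha=0$ one has $\Omega M(0)=M(0)$, so $M(0)$ has finite syzygy type and hence $DM(0)$ has finite cosyzygy type over $\Lambda^{op}$, which places $DM(0)$ in $\operatorname{Loc}(\Inj\Lambda^{op})$. Then, since $\Lambda^{op}$ is local and $DM(0)$ has radical length two, there is a short exact sequence $0\to S_1\to DM(0)\to S_2\to 0$ with $S_1,S_2$ nonzero semisimple; an Eilenberg-swindle over countable coproducts turns this into $0\to S\to\bigoplus DM(0)\to S\to 0$ with $S$ an infinite direct sum of copies of the unique simple, and splicing gives a left-infinite resolution of $S$ by coproducts of $DM(0)$, putting $S$, hence the simple module, hence everything, in $\operatorname{Loc}(\Inj\Lambda^{op})$. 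This use of $M(0)$ and the swindle is the missing idea in your proposal.
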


\begin{proof}
  Since $\Lambda[M(q)]^{op}$ is a triangular matrix ring, by a result of
  Cummings it is sufficient to show that injectives generate for
  $\Lambda^{op}$~\cite[Example 6.11]{MR4546142}.

  Since $M(0)=\Omega M(0)$, $M(0)$ has finite syzygy type as a $\Lambda$-module,
  and so, taking the vector space dual, $DM(0)$ has finite cosyzygy type as a
  $\Lambda^{op}$-module, which implies that $DM(0)$ is in
  $\operatorname{Loc}(\operatorname{Inj}\Lambda^{op})$~\cite[Proposition
  7.2]{MR3982972}.

 Since $DM(0)$ has radical length two, there is a short exact sequence
 $$0\to S_{1}\to DM(0)\to S_{2}\to 0$$
 where $S_{1}$ and $S_{2}$ are nonzero semisimple $\Lambda^{op}$-modules. Taking the
 coproduct of infinitely many copies of this sequence, we get a short exact
 sequence
 $$0\to S\to \bigoplus DM(0)\to S\to0$$
 where $S$ is an infinitely generated semisimple module. Splicing together
 copies of this sequence gives a resolution
 $$\cdots\to\bigoplus DM(0)\to\bigoplus DM(0)\to S\to 0$$
 by coproducts of copies of $DM(0)$, and so $S$ is in
 $\operatorname{Loc}(\operatorname{Inj}\Lambda^{op})$~\cite[Proposition
 2.1(h)]{MR3982972}. Therefore the unique simple $\Lambda^{op}$-module is in
 $\operatorname{Loc}(\operatorname{Inj}\Lambda^{op})$ and so
 $\mathcal{D}(\Lambda^{op})=\operatorname{Loc}(\operatorname{Inj}\Lambda^{op})$~\cite[Proposition
 2.1(e) and Lemma 6.1]{MR3982972}.
\end{proof}

\bibliography{deloop}{}

\providecommand{\bysame}{\leavevmode\hbox to3em{\hrulefill}\thinspace}
\providecommand{\MR}{\relax\ifhmode\unskip\space\fi MR }
\providecommand{\MRhref}[2]{%
  \href{http://www.ams.org/mathscinet-getitem?mr=#1}{#2}
}
\providecommand{\href}[2]{#2}
\begin{thebibliography}{FGR75}

\bibitem[Bas60]{MR157984}
Hyman Bass, \emph{Finitistic dimension and a homological generalization of
  semi-primary rings}, Trans. Amer. Math. Soc. \textbf{95} (1960), 466--488.
  \MR{157984}

\bibitem[Cum23]{MR4546142}
Charley Cummings, \emph{Ring constructions and generation of the unbounded
  derived module category}, Algebr. Represent. Theory \textbf{26} (2023),
  no.~1, 281--315. \MR{4546142}

\bibitem[FGR75]{MR0389981}
Robert~M. Fossum, Phillip~A. Griffith, and Idun Reiten, \emph{Trivial
  extensions of abelian categories}, Lecture Notes in Mathematics, Vol. 456,
  Springer-Verlag, Berlin-New York, 1975, Homological algebra of trivial
  extensions of abelian categories with applications to ring theory.
  \MR{0389981}

\bibitem[G{\'e}l22]{MR4355734}
Vincent G{\'e}linas, \emph{The depth, the delooping level and the finitistic
  dimension}, Adv. Math. \textbf{394} (2022), Paper No. 108052, 34.
  \MR{4355734}

\bibitem[J{\c S}06]{MR2238367}
David~A. Jorgensen and Liana~M. {\c S}ega, \emph{Independence of the total
  reflexivity conditions for modules}, Algebr. Represent. Theory \textbf{9}
  (2006), no.~2, 217--226. \MR{2238367}

\bibitem[Mar17]{marczinzik:2017}
Ren\'{e} Marczinzik, \emph{On stable modules that are not {G}orenstein
  projective}, arXiv:1709.01132v3, 2017.

\bibitem[Ric19]{MR3982972}
Jeremy Rickard, \emph{Unbounded derived categories and the finitistic dimension
  conjecture}, Adv. Math. \textbf{354} (2019), 106735, 21. \MR{3982972}

\bibitem[RZ20]{MR4076806}
Claus~Michael Ringel and Pu~Zhang, \emph{Gorenstein-projective and
  semi-{G}orenstein-projective modules}, Algebra Number Theory \textbf{14}
  (2020), no.~1, 1--36. \MR{4076806}

\end{thebibliography}
\bibliographystyle{amsalpha}

\end{document}